\newtheorem{theorem}{Theorem}[section]
\newtheorem{proposition}[theorem]{Proposition}
\newtheorem{corollary}[theorem]{Corollary}
\newtheorem{lemma}[theorem]{Lemma}
\theoremstyle{remark}
\theoremstyle{definition}
\newcommand{\bq}{\begin{equation}}
\newcommand{\eq}{\end{equation}}
\newcommand{\beqn}{\begin{eqnarray*}}
\newcommand{\eeqn}{\end{eqnarray*}}
\newcommand{\beq}{\begin{eqnarray}}
\newcommand{\eeq}{\end{eqnarray}}
\newcommand{\rar}{\rightarrow}
\newcommand{\bc}{\begin{centre}}
\newcommand{\ec}{\end{centre}}
\newcommand{\ba}{\begin{array}}
\newcommand{\ea}{\end{array}}
\newcommand{\inp}[2]{\langle{#1},\,{#2} \rangle}
\renewcommand{\Delta}{{\nabla}}
\begin{document}
\title[Operator-valued Multishifts]{A short note on similarity of operator-valued multishifts}
\author[S. Ghara]{Soumitra Ghara}
\author[S. Kumar]{Surjit Kumar}
\author[S. Trivedi]{Shailesh Trivedi}
\address{Department of Mathematics\\
Indian Institute of Technology  Kharagpur, Midnapore-721302, India}
   \email{soumitra@maths.iitkgp.ac.in\\ ghara90@gmail.com}
\address{Department of Mathematics \\  Indian Institute of Technology Madras, Chennai  600036, India}
\email{surjit@iitm.ac.in}
\address{Department of Mathematics\\
Birla Institute of Technology and Science, Pilani 333031, India}
\email{shailesh.trivedi@pilani.bits-pilani.ac.in\\ shaileshtrivedi8@gmail.com}

\thanks{The work of the first author is supported by INSPIRE Faculty Fellowship (Ref. No. DST/INSPIRE/04/2021/002555). Support for the work of  the second author was provided in the form of the SERB-MATRICS grant (Ref. No. MTR/2022/000457) and NFIG grant (Ref. No. RF/21-22/1540/MA/NFIG/008991) of IIT Madras. 
The work of the third author was supported by Inspire Faculty Fellowship (Ref. No. DST/INSPIRE/04/2018/000338), OPERA (FR/SCM/03-Nov-2022/MATH) funded by BITS Pilani and SERB-SRG (SRG/2023/000641-G)}
   \subjclass[2020]{Primary 47A13, 47B37, Secondary 46E40, 47B32}
\keywords{operator-valued multishift, similarity, unitary equivalence, vector-valued reproducing kernel Hilbert spaces.}

\date{}

\begin{abstract}
A complete characterization of the similarity between two operator-valued multishifts with invertible operator weights is obtained purely in terms of operator weights. This generalizes several existing results of the unitary equivalence of two (multi)shifts. Further, we utilize the aforementioned similarity criteria to determine the similarity between two tuples of operators of multiplication by the coordinate functions on certain reproducing kernel Hilbert spaces determined by diagonal kernels.
\end{abstract}

\maketitle

\section{Introduction}

This article is another installment in the continuation of the study of operator-valued multishift initiated in \cite{GKT, GKT1}. In this short note, we focus our attention on the similarity of two operator-valued multishifts with invertible operator weights. We give a necessary and sufficient condition under which two operator-valued multishifts with invertible operator weights are similar. This condition is given purely in terms of the moments of the respective operator-valued multishifts. To explain the main result of this short note, we proceed by briefly recalling the preliminaries about the operator-valued multishifts. The reader is referred to \cite{GKT} and \cite{GKT1} for a detailed overview of the operator-valued multishifts. 

Let $\mathbb N$ denote the set of non-negative integers. Let $d$ be a positive integer and $H$ be a complex separable Hilbert space. We define $\ell^2_H(\mathbb N^d) :=\displaystyle\mathrel{\mathop\oplus_{\alpha \in \mathbb N^d}} H$ as the orthogonal direct sum of $\mbox{Card}(\mathbb N^d)$ many copies of $H$. Then $\ell^2_H(\mathbb N^d)$ is a Hilbert space equipped with the following inner product:
\beqn \inp{x}{y} := \sum_{\alpha \in \mathbb N^d} \inp{x_{\alpha}}{y_{\alpha}}_{_H},
\quad x = \displaystyle\mathrel{\mathop\oplus_{\alpha \in \mathbb N^d}} x_{\alpha},\
y=\displaystyle\mathrel{\mathop\oplus_{\alpha \in \mathbb N^d}} y_{\alpha} \in \ell^2_H(\mathbb N^d).\eeqn
Let $\big\{{A^{(j)}_{\alpha}} : \alpha \in  \mathbb N^d,\ j = 1, \ldots, d \big\}$ be a multisequence of bounded  invertible operators on $H$ with the following conditions: (i) $\sup_{\alpha \in \mathbb N^d} \|A^{(j)}_\alpha\| < \infty$ for all $j = 1, \ldots, d$ and (ii) $A^{(i)}_{\alpha+\varepsilon_j} A^{(j)}_{\alpha}= A^{(j)}_{\alpha+\varepsilon_i} A^{(i)}_{\alpha} \ \mbox{for all } \alpha \in \mathbb N^d$ and $i,j = 1, \ldots, d$, where $\varepsilon_j$ is the $d$-tuple in $\mathbb N^d$ in which $1$ is at the $j$-th place and $0$ elsewhere. Then an operator-valued multishift on $\ell^2_H(\mathbb N^d)$ is the commuting $d$-tuple $T = (T_1, \ldots, T_d)$ of bounded linear operators $T_1, \ldots, T_d$ on $\ell^2_H(\mathbb N^d)$ defined as
\beqn T_j(\displaystyle\mathrel{\mathop\oplus_{\alpha \in \mathbb N^d}} x_{\alpha}):= \displaystyle\mathrel{\mathop\oplus_{\alpha \in \mathbb N^d}}
A^{(j)}_{\alpha-\varepsilon_j}x_{\alpha-\varepsilon_j}, \quad \displaystyle\mathrel{\mathop\oplus_{\alpha \in \mathbb N^d}} x_{\alpha}
\in \ell^2_H(\mathbb N^d), \ j=1, \ldots, d.
\eeqn  
If for $\alpha = (\alpha_1, \ldots, \alpha_d) \in \mathbb N^d$, $\alpha_j = 0$, then we interpret $A^{(j)}_{\alpha-\varepsilon_j}$ as a zero operator and $x_{\alpha-\varepsilon_j}$ as a zero vector. By an $H$-valued formal power series we mean the series
$\displaystyle\sum_{\alpha \in \mathbb N^d} x_\alpha z^\alpha$, $x_\alpha \in H$, without regard to convergence
at any point $z \in \mathbb C^d$. For a given multisequence $\mathscr B = \big\{B_\alpha : \alpha \in \mathbb N^d\big\}$ of bounded invertible operators on $H$, the Hilbert space $\mathcal H^2(\mathscr B)$ of $H$-valued formal power series is defined as 
\beqn
\mathcal H^2(\mathscr B) := \left\{\sum_{\alpha \in \mathbb N^d} x_\alpha z^\alpha : \sum_{\alpha \in \mathbb N^d} \|B_\alpha x_\alpha\|^2 < \infty\right\}
\eeqn
 endowed with the following inner product:
  For $f(z) = \sum_{\alpha \in \mathbb N^d} x_\alpha z^\alpha$ and $g(z) = \sum_{\alpha \in \mathbb N^d} y_\alpha z^\alpha$ in $\mathcal H^2(\mathscr B)$,
 \beqn
 \inp{f(z)}{g(z)}_{\!_{\mathcal H^2(\mathscr B)}} := \sum_{\alpha \in \mathbb N^d}\inp{B_\alpha x_\alpha}{B_\alpha y_\alpha}_{\!_H}.
 \eeqn
Note that $H$-valued polynomials are dense in $\mathcal H^2(\mathscr B)$.  It follows from \cite[Theorem 5.4]{GKT1} that an operator-valued multishift on $\ell^2_H(\mathbb N^d)$ with invertible operator weights is unitarily equivalent to the commuting $d$-tuple $\mathscr M_z = (\mathscr M_{z_1}, \ldots, \mathscr M_{z_d})$ of (bounded) operators of multiplication by the coordinate function $z_j$ on $\mathcal H^2(\mathscr B)$, where $\mathscr B = \big\{B_\alpha : \alpha \in \mathbb N^d\big\}$ is given by \cite[Proposition 2.3]{GKT1}. In view of this equivalence, we work with the $\mathcal H^2(\mathscr B)$ model of an operator-valued multishift in the rest of this paper. The main result of this paper is stated as follows:

\begin{theorem}\label{similar-thm}
Let $H$ and $\tilde H$ be two complex separable Hilbert spaces and $\mathscr B = \{B_\alpha\}_{\alpha \in \mathbb N^d}$, $\tilde{\mathscr B} = \{\tilde B_\alpha\}_{\alpha \in \mathbb N^d}$ be two multisequences of bounded invertible operators on $H$ and $\tilde H$, respectively. Suppose that $\mathscr M_z = (\mathscr M_{z_1}, \ldots, \mathscr M_{z_d})$ is bounded on $\mathcal H^2(\mathscr B)$ and on $\mathcal H^2(\tilde{\mathscr B})$. Then $\mathscr M_z$ on $\mathcal H^2(\mathscr B)$ is similar to $\mathscr M_z$ on $\mathcal H^2(\tilde{\mathscr B})$ if and only if there exist a bounded invertible operator $C : \tilde H \to H$ and positive constants $m_1, m_2$ such that 
\beq\label{similar-ineq}
m_1 C^* B^*_\alpha B_\alpha C \leqslant {\tilde B_\alpha}^* \tilde B_\alpha \leqslant m_2 C^* B^*_\alpha B_\alpha C \quad \mbox{for all } \alpha \in \mathbb N^d.
\eeq
\end{theorem}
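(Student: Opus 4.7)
The plan is to treat the two implications separately. The sufficiency follows from an explicit coefficientwise similarity; the necessity requires extracting the operator $C$ (and its inverse) from the ``constant terms'' of the intertwiner and its inverse.

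For the sufficiency, given $C:\tilde H\to H$ satisfying \eqref{similar-ineq}, I would define $V:\mathcal H^2(\tilde{\mathscr B})\to\mathcal H^2(\mathscr B)$ by $V\bigl(\sum_\alpha \tilde x_\alpha z^\alpha\bigr) := \sum_\alpha C\tilde x_\alpha z^\alpha$. Combining the identity $\|B_\alpha C\tilde x_\alpha\|^2 = \langle C^* B_\alpha^* B_\alpha C\tilde x_\alpha, \tilde x_\alpha\rangle$ with \eqref{similar-ineq} and summing over $\alpha$ yields $m_1\|Vf\|^2 \le \|f\|^2 \le m_2\|Vf\|^2$, so $V$ is bounded with bounded inverse; since $V$ acts by relabeling coefficients, it commutes with every $\mathscr M_{z_j}$ and hence is the desired similarity.

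For the necessity, suppose $X:\mathcal H^2(\mathscr B)\to\mathcal H^2(\tilde{\mathscr B})$ is a bounded invertible intertwiner. I would expand $X(x\cdot 1) = \sum_\beta \phi_\beta(x) z^\beta$ in $\mathcal H^2(\tilde{\mathscr B})$ and $X^{-1}(\tilde x\cdot 1) = \sum_\beta \psi_\beta(\tilde x) z^\beta$ in $\mathcal H^2(\mathscr B)$, and set $Dx := \phi_0(x)$, $C\tilde x := \psi_0(\tilde x)$. Since $X^{-1}$ intertwines each $\mathscr M_{z_j}$, one obtains $X^{-1}(\tilde x\cdot z^\alpha) = \sum_\beta \psi_\beta(\tilde x) z^{\alpha+\beta}$; computing the $\mathcal H^2(\mathscr B)$-norm of this element and retaining only the $\beta=0$ summand gives
\begin{equation*}
\|B_\alpha C\tilde x\|^2 \le \|X^{-1}\|^2\,\|\tilde B_\alpha\tilde x\|^2,
\end{equation*}
which is the left half of \eqref{similar-ineq} with $m_1 = \|X^{-1}\|^{-2}$. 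The symmetric estimate from $X$ gives $D^*\tilde B_\alpha^* \tilde B_\alpha D \le \|X\|^2 B_\alpha^* B_\alpha$. To close, I would apply $X^{-1}X$ to $x\cdot 1$ and $XX^{-1}$ to $\tilde x\cdot 1$ and match $z^0$-coefficients, forcing $CDx=x$ and $DC\tilde x=\tilde x$; substituting $D=C^{-1}$ into the second bound then produces the right half of \eqref{similar-ineq} with $m_2=\|X\|^2$.

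The main subtlety is the coefficient-matching step used to prove $D=C^{-1}$: the composition $X^{-1}X$ must be evaluated on the convergent series $\sum_\beta \phi_\beta(x) z^\beta$ rather than termwise, so I would justify the passage by approximating with polynomial partial sums in $\mathcal H^2(\tilde{\mathscr B})$, applying $X^{-1}$ by continuity, and using that coefficient extraction is a bounded linear map on $\mathcal H^2(\mathscr B)$ because each $B_\delta$ is invertible. Once this is in place, uniqueness of the power-series representation identifies $D$ with $C^{-1}$ and completes the proof.
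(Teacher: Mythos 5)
Your proof is correct, and its skeleton matches the paper's: in both arguments the operator $C$ is the degree-zero coefficient of $X^{-1}$ (the paper's $P_{\mathcal H_0}X^{-1}|_{\tilde{\mathcal H}_0}$), the identities $CD=I_H$ and $DC=I_{\tilde H}$ come from the fact that the intertwining relation forces the $z^0$-coefficient of $X^{\pm 1}(\cdot\, z^\beta)$ to vanish for $\beta\neq 0$, and the sufficiency is established by the same block-diagonal intertwiner. Where you genuinely diverge is in how the moment inequality is extracted. The paper first proves the exact recursion \eqref{recursion} for the diagonal blocks $P_{\tilde{\mathcal H}_\alpha}X|_{\mathcal H_\alpha}$, which relies on the invertibility of $P_{\mathcal H_{\alpha+\varepsilon_j}}\mathscr M_{z_j}|_{\mathcal H_\alpha}$ from Lemma~\ref{Mzj-invertible}(ii), and only then estimates norms. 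You instead write $\tilde x z^\alpha=\mathscr M_z^\alpha(\tilde x\cdot 1)$, apply $X^{-1}$, and simply discard all but the $\beta=0$ term of $\sum_\beta\|B_{\alpha+\beta}\psi_\beta(\tilde x)\|^2$; this one-line positivity estimate yields $C^*B_\alpha^*B_\alpha C\leqslant\|X^{-1}\|^2\,\tilde B_\alpha^*\tilde B_\alpha$ without the recursion or the lemma, which is a real simplification (and it produces the same constants $m_1=\|X^{-1}\|^{-2}$, $m_2=\|X\|^2$). Your caution about interchanging $X^{-1}$ with the series when proving $CD=I$ is warranted and correctly resolved via boundedness of coefficient extraction. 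The only step you pass over silently is the surjectivity of $V$ in the sufficiency direction: being bounded below gives closed range, and one should add that the range contains all $H$-valued polynomials (since $C$ is onto) and is therefore dense; this is routine but should be said.
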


Even for the simplest scenario where $d=1$, the criteria for the similarity between two operator-valued unilateral weighted shifts weren't previously understood solely in terms of their moments. Instead, the similarity of these shifts was characterized by examining the intertwiner rather than relying on their moments, see \cite[Lemma 2.1]{L}.

The result of Theorem \ref{similar-thm} generalizes \cite[Lemma 2.2]{K} and \cite[Theorem $2'$(b)]{S}. The proof and the consequences of this theorem are presented in the next section. All the notaions and symbols used in the proof of Theorem \ref{similar-thm} are standard in the literature and have their usual meaning.

\section{Proof of Theorem \ref{similar-thm}}

We begin by setting the following notation.
\beq\label{H-alpha}
\mathcal H_\alpha := \{x z^\alpha : x \in H\}, \quad \alpha \in \mathbb N^d.
\eeq
 If $\alpha_j = 0$ for some $\alpha \in \mathbb N^d$, then we interpret $\mathcal H_{\alpha-\varepsilon_j}$ as $\{0\}$. Note that 
\beq\label{H-alpha-deco}
 \mathcal H^2(\mathscr B) = \displaystyle\mathrel{\mathop\oplus_{\alpha \in \mathbb N^d}} \mathcal H_\alpha.
\eeq 
The following well-known result (see \cite[Theorem 5.4]{GKT1}) will be used in the proof of the main result of this section.

\begin{lemma}\label{Mzj-invertible}
Let $H$ be a complex separable Hilbert space and $\mathscr B = \{B_\alpha\}_{\alpha \in \mathbb N^d}$ be a multisequence of bounded invertible operators on $H$. Let $\mathcal H^2(\mathscr B)$ be the Hilbert space of $H$-valued formal power series. Suppose that $\mathscr M_z = (\mathscr M_{z_1}, \ldots, \mathscr M_{z_d})$ is bounded on $\mathcal H^2(\mathscr B)$. 
Then the following statements are true:
\begin{enumerate}
\item[(i)] For all $x \in H$, $\alpha \in \mathbb N^d$ and  $ j = 1, \ldots, d,$ we have
\beq\label{Mz*}
\mathscr M^*_{z_j}(x z^\alpha) = (B_{\alpha-\varepsilon_j}^{*} B_{\alpha-\varepsilon_j})^{-1} B_\alpha^* B_\alpha x z^{\alpha-\varepsilon_j}, \ 
\eeq
where $B_{\alpha-\varepsilon_j}$ is interpreted as a zero operator if $\alpha_j = 0$ for some $\alpha \in \mathbb N^d$.
\item[(ii)] The linear map $P_{\mathcal H_{\alpha+\varepsilon_j}} \mathscr M_{z_j}|_{\mathcal H_\alpha} : \mathcal H_\alpha \rar \mathcal H_{\alpha+\varepsilon_j}$ is bounded and invertible for all $\alpha \in \mathbb N^d$ and $j = 1, \ldots, d$, where for a closed subspace $\mathcal E$ of $\mathcal H^2(\mathscr B)$, $P_{\mathcal E}$ denotes the orthogonal projection of $\mathcal H^2(\mathscr B)$ onto $\mathcal E$.
\end{enumerate}
\end{lemma}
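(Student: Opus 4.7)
The plan is to leverage the orthogonal decomposition \eqref{H-alpha-deco} together with the observation that $\mathscr M_{z_j}$ shifts $\mathcal H_\beta$ into $\mathcal H_{\beta+\varepsilon_j}$; once this mapping property is fixed, both claims essentially reduce to a direct inner-product computation plus a bounded-inverse argument.

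For (i), the shift property above implies that $\mathscr M^*_{z_j}$ maps $\mathcal H_\alpha$ into $\mathcal H_{\alpha-\varepsilon_j}$, interpreted as $\{0\}$ when $\alpha_j=0$ (in which case the formula reads as $0=0$). Thus for $\alpha_j\geqslant 1$ and $x\in H$ there exists a unique $w\in H$ with $\mathscr M^*_{z_j}(xz^\alpha)=wz^{\alpha-\varepsilon_j}$. To identify $w$, I would compute
\[
\langle \mathscr M_{z_j}(yz^{\alpha-\varepsilon_j}),\,xz^\alpha\rangle \;=\; \langle yz^{\alpha-\varepsilon_j},\,wz^{\alpha-\varepsilon_j}\rangle
\]
for arbitrary $y\in H$, using the definition of the $\mathcal H^2(\mathscr B)$ inner product to rewrite the left side as $\langle B_\alpha y,B_\alpha x\rangle_H=\langle y,B_\alpha^* B_\alpha x\rangle_H$ and the right side as $\langle y,B_{\alpha-\varepsilon_j}^* B_{\alpha-\varepsilon_j}w\rangle_H$. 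Since $y$ is arbitrary and $B_{\alpha-\varepsilon_j}$ is invertible, solving for $w$ gives the formula \eqref{Mz*}.

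For (ii), the containment $\mathscr M_{z_j}(\mathcal H_\alpha)\subseteq \mathcal H_{\alpha+\varepsilon_j}$ means the compression $P_{\mathcal H_{\alpha+\varepsilon_j}}\mathscr M_{z_j}|_{\mathcal H_\alpha}$ is nothing but the map $xz^\alpha\mapsto xz^{\alpha+\varepsilon_j}$ between $\mathcal H_\alpha$ and $\mathcal H_{\alpha+\varepsilon_j}$. Boundedness is inherited from $\mathscr M_{z_j}$; surjectivity onto $\mathcal H_{\alpha+\varepsilon_j}$ is immediate from the definition \eqref{H-alpha}; and injectivity follows because $\|xz^{\alpha+\varepsilon_j}\|=\|B_{\alpha+\varepsilon_j}x\|_H$ vanishes only when $x=0$, thanks to invertibility of $B_{\alpha+\varepsilon_j}$. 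Boundedness of the inverse is then automatic from the open mapping theorem, $\mathcal H_\alpha$ and $\mathcal H_{\alpha+\varepsilon_j}$ being closed subspaces of the Hilbert space $\mathcal H^2(\mathscr B)$ and hence Banach spaces in their own right.

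There is no real obstacle here; the only bookkeeping point is the edge case $\alpha_j=0$ in (i), which is handled cleanly by the convention that $\mathcal H_{\alpha-\varepsilon_j}=\{0\}$ and $B_{\alpha-\varepsilon_j}$ is treated as the zero operator. Once the mapping properties under the $\mathcal H_\alpha$-decomposition are recorded, the rest is a short verification.
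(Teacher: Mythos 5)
Your argument is correct and is the standard verification; the paper itself gives no proof of this lemma, merely citing \cite[Theorem 5.4]{GKT1}, so your write-up simply supplies the computation that the citation delegates. Both the adjoint identity via $\langle \mathscr M_{z_j}(yz^{\alpha-\varepsilon_j}), xz^\alpha\rangle = \langle y, B_\alpha^*B_\alpha x\rangle_H = \langle y, B_{\alpha-\varepsilon_j}^*B_{\alpha-\varepsilon_j}w\rangle_H$ and the invertibility of $xz^\alpha \mapsto xz^{\alpha+\varepsilon_j}$ (using that $\mathcal H_\alpha$ is closed because $B_\alpha$ is bounded below) are exactly as one would expect.
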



We are now in a position to present the proof of the main result.

\begin{proof}[Proof of Theorem \ref{similar-thm}]
Suppose that $\mathscr M_z$ on $\mathcal H^2(\mathscr B)$ is similar to $\mathscr M_z$ on $\mathcal H^2(\tilde{\mathscr B})$. Let $X : \mathcal H^2(\mathscr B) \rar \mathcal H^2(\tilde{\mathscr B})$ be an invertible operator such that $X \mathscr M_{z_j} = \mathscr M_{z_j} X$ for all $j = 1, \ldots, d$. For each $\alpha \in \mathbb N^d$, let $\mathcal H_\alpha$ and $\tilde{\mathcal H}_\alpha$ be the respective subspaces of $\mathcal H^2(\mathscr B)$ and $\mathcal H^2(\tilde{\mathscr B})$ as defined by \eqref{H-alpha}. In view of the decomposition \eqref{H-alpha-deco}, we get
\beqn
P_{\tilde{\mathcal H}_\alpha}X \mathscr M_{z_j}|_{\mathcal H_\beta} = P_{\tilde{\mathcal H}_\alpha} \mathscr M_{z_j} X|_{\mathcal H_\beta} \mbox{ for all } \alpha, \beta \in \mathbb N^d \mbox{ and } j = 1, \ldots, d.
\eeqn
Consequently, it yields that 
\beq\label{alpha-gamma}
\sum_{\gamma \in \mathbb N^d} P_{\tilde{\mathcal H}_\alpha} X|_{\mathcal H_\gamma} P_{\mathcal H_\gamma} \mathscr M_{z_j}|_{\mathcal H_\beta} = \sum_{\gamma \in \mathbb N^d} P_{\tilde{\mathcal H}_\alpha} \mathscr M_{z_j}|_{\tilde{\mathcal H}_\gamma} P_{\tilde{\mathcal H}_\gamma} X|_{\mathcal H_\beta} \quad \mbox{for all } \alpha, \beta \in \mathbb N^d.
\eeq
Both the series in \eqref{alpha-gamma} converge in the strong operator topology. Further, note that for any $\alpha, \beta \in \mathbb N^d$, $P_{\mathcal H_\alpha}\mathscr M_{z_j}|_{\mathcal H_\beta} = 0$ if $\beta \neq \alpha - \varepsilon_j$. Using this in \eqref{alpha-gamma}, we get
\beq\label{X-alpha-alpha}
P_{\tilde{\mathcal H}_\alpha} X|_{\mathcal H_{\beta+\varepsilon_j}} P_{\mathcal H_{\beta+\varepsilon_j}} \mathscr M_{z_j}|_{\mathcal H_\beta} &=& P_{\tilde{\mathcal H}_\alpha} \mathscr M_{z_j}|_{\tilde{\mathcal H}_{\alpha-\varepsilon_j}} P_{\tilde{\mathcal H}_{\alpha-\varepsilon_j}} X|_{\mathcal H_\beta} \notag\\
&&\mbox{for all } \ \alpha, \beta \in \mathbb N^d \mbox{ and } j = 1, \ldots, d.
\eeq
Putting $\alpha = 0$ in \eqref{X-alpha-alpha} and using Lemma \ref{Mzj-invertible}, we get
\beq\label{X-0-alpha}
P_{\tilde{\mathcal H}_0} X|_{\mathcal H_{\beta+\varepsilon_j}} = 0 \ \mbox{ for all } \ \beta \in \mathbb N^d \ \mbox{ and } \ j = 1, \ldots, d.
\eeq
Further, putting $\alpha = \beta+\varepsilon_j$ in \eqref{X-alpha-alpha}, we get that
\beqn
P_{\tilde{\mathcal H}_{\beta + \varepsilon_j}} X|_{\mathcal H_{\beta+\varepsilon_j}} &=& P_{\tilde{\mathcal H}_{\beta+\varepsilon_j}} \mathscr M_{z_j}|_{\tilde{\mathcal H}_{\beta}} P_{\tilde{\mathcal H}_\beta}X|_{\mathcal H_\beta} (P_{\mathcal H_{\beta+\varepsilon_j}} \mathscr M_{z_j}|_{\mathcal H_\beta})^{-1} \notag\\ 
&&\mbox{ for all } \  \beta \in \mathbb N^d \mbox{ and } j = 1, \ldots, d.
\eeqn
Using recursion in the above expression, we get
\beq\label{recursion}
P_{\tilde{\mathcal H}_\alpha} X|_{\mathcal H_\alpha} = P_{\tilde{\mathcal H}_\alpha} \mathscr M^\alpha_z|_{\tilde{\mathcal H}_0} P_{\tilde{\mathcal H}_0} X|_{\mathcal H_0} (P_{\mathcal H_\alpha} \mathscr M^\alpha_z|_{\mathcal H_0})^{-1} \ \mbox{ for all } \ \alpha \in \mathbb N^d.
\eeq
Note that $P_{\tilde{\mathcal H}_0} X|_{\mathcal H_0} : \mathcal H_0 \rar \tilde{\mathcal H}_0$ is an invertible operator. Indeed, 
\beqn
I_{\tilde{\mathcal H}_0} = P_{\tilde{\mathcal H}_0} X X^{-1}|_{\tilde{\mathcal H}_0} = \sum_{\alpha \in \mathbb N^d} P_{\tilde{\mathcal H}_0} X|_{\mathcal H_\alpha} P_{\mathcal H_\alpha} X^{-1}|_{\tilde{\mathcal H}_0} \overset{\eqref{X-0-alpha}}= P_{\tilde{\mathcal H}_0} X|_{\mathcal H_0} P_{\mathcal H_0}X^{-1}|_{\tilde{\mathcal H}_0}.
\eeqn
Letting $C := P_{\mathcal H_0}X^{-1}|_{\tilde{\mathcal H}_0}$, we deduce from \eqref{recursion} that for all $\alpha \in \mathbb N^d$,
\beq\label{first-ineq1}
P_{\tilde{\mathcal H}_0} \mathscr M^{*\alpha}_z \mathscr M^\alpha_z|_{\tilde{\mathcal H}_0} = C^* (P_{\mathcal H_\alpha} \mathscr M^\alpha_z|_{\mathcal H_0})^* (P_{\tilde{\mathcal H}_\alpha} X|_{\mathcal H_\alpha})^* (P_{\tilde{\mathcal H}_\alpha} X|_{\mathcal H_\alpha}) (P_{\mathcal H_\alpha} \mathscr M^\alpha_z|_{\mathcal H_0}) C.
\eeq
Through repeated applications of \eqref{Mz*}, we obtain that
$$P_{\mathcal H_0} \mathscr M^{*\alpha}_z \mathscr M^\alpha_z|_{\mathcal H_0} = B^*_\alpha B_\alpha \mbox{ for all } \alpha \in \mathbb N^d.$$ 
Also, observe that $(P_{\tilde{\mathcal H}_\alpha} X|_{\mathcal H_\alpha})^* (P_{\tilde{\mathcal H}_\alpha} X|_{\mathcal H_\alpha}) \leqslant \|X\|^2 I.$
Substituting these values in \eqref{first-ineq1}, we get 
\beq\label{first-ineq}
{\tilde B_\alpha}^* \tilde B_\alpha \leqslant \|X\|^2 C^* B_\alpha^* B_\alpha C \mbox{ for all } \alpha \in \mathbb N^d.
\eeq
Similar arguments for $X^{-1}$ yield that
\beq\label{second-ineq}
\frac{1}{\|X^{-1}\|^2} C^* B_\alpha^* B_\alpha C \leqslant {\tilde B_\alpha}^* \tilde B_\alpha \quad \mbox{for all } \alpha \in \mathbb N^d.
\eeq
Combining \eqref{first-ineq} and \eqref{second-ineq}, we get
\beqn
\frac{1}{\|X^{-1}\|^2} C^* B_\alpha^* B_\alpha C \leqslant {\tilde B_\alpha}^* \tilde B_\alpha \leqslant \|X\|^2 C^* B_\alpha^* B_\alpha C \mbox{ for all } \alpha \in \mathbb N^d,
\eeqn
establishing \eqref{similar-ineq}.

Conversely, suppose that there exist a bounded invertible operator $C$ on $H$ and positive constants $m_1, m_2$ such that \eqref{similar-ineq} holds. Define $X : \mathcal H^2(\mathscr B) \rar \mathcal H^2(\tilde{\mathscr B})$ as
\beqn
X\Big(\sum_{\alpha \in \mathbb N^d} x_\alpha z^\alpha\Big) = \sum_{\alpha \in \mathbb N^d} (C^{-1}x_\alpha) z^\alpha, \quad \sum_{\alpha \in \mathbb N^d} x_\alpha z^\alpha \in \mathcal H^2(\mathscr B).
\eeqn
Note that $X$ is a block diagonal operator with respect to the decomposition $\mathcal H^2(\mathscr B) = \displaystyle\mathrel{\mathop\oplus_{\alpha \in \mathbb N^d}} \mathcal H_\alpha$. Let $X = \displaystyle\mathrel{\mathop\oplus_{\alpha \in \mathbb N^d}} X_\alpha$, where $X_\alpha : \mathcal H_\alpha \rar \tilde{\mathcal H}_\alpha$ is given by $X_\alpha (xz^\alpha) = (C^{-1}x) z^\alpha$ for all $\alpha \in \mathbb N^d$ and $x \in H$. Then \eqref{similar-ineq} implies that $\sqrt{m_1}\|xz^\alpha\| \leqslant \|X_\alpha (xz^\alpha)\| \leqslant \sqrt{m_2}\|xz^\alpha\|$ for all $\alpha \in \mathbb N^d$ and $x \in H$. Thus $X$ is a bounded invertible linear map. Further, the verification of the fact that $X \mathscr M_{z_j} = \mathscr M_{z_j} X$ for all $j = 1, \ldots, d$, is obvious. This completes the proof.
\end{proof}

The proof of the preceding theorem shows that the similarity between the multiplication operator tuples $\mathscr M_z$ on $\mathcal H^2(\mathscr B)$ and on $\mathcal H^2(\tilde{\mathscr B})$ can be characterized through an intertwining diagonal invertible operator (cf. \cite[Lemma 2.1]{L}).  

\begin{corollary}
Let $H$, $\tilde H$ be two complex separable Hilbert spaces and $\mathscr B = \{B_\alpha\}_{\alpha \in \mathbb N^d}$, $\tilde{\mathscr B} = \{\tilde B_\alpha\}_{\alpha \in \mathbb N^d}$ be two multisequences of invertible operators on $H$ and $\tilde H$, respectively. Suppose that $\mathscr M_z = (\mathscr M_{z_1}, \ldots, \mathscr M_{z_d})$ is bounded on $\mathcal H^2(\mathscr B)$ and on $\mathcal H^2(\tilde{\mathscr B})$. Then $\mathscr M_z$ on $\mathcal H^2(\mathscr B)$ is similar to $\mathscr M_z$ on $\mathcal H^2(\tilde{\mathscr B})$ if and only if there exists a bounded invertible operator $C : \tilde H \to H$ such that the map $X : \mathcal H^2(\mathscr B) \rar \mathcal H^2(\tilde{\mathscr B})$, given by $X f(z) = C^{-1} f(z)$ for all $f \in \mathcal H^2(\mathscr B)$, is bounded and invertible. 
\end{corollary}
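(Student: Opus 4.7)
The plan is to prove this corollary as an almost immediate consequence of Theorem \ref{similar-thm}, by observing that the coefficient-wise operator constructed in the converse direction of the theorem's proof is exactly the intertwiner described in the corollary. I would split the argument into the two implications, leaning heavily on work already done.

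For the forward implication (similarity implies existence of such a diagonal $X$), I would invoke Theorem \ref{similar-thm} to obtain a bounded invertible $C : \tilde H \to H$ and positive constants $m_1, m_2$ satisfying \eqref{similar-ineq}. Following the converse portion of the theorem's proof verbatim, I would define $X(\sum_{\alpha} x_\alpha z^\alpha) := \sum_{\alpha} (C^{-1} x_\alpha) z^\alpha$; this is precisely the operator $f(z) \mapsto C^{-1} f(z)$ once one interprets the action coefficient-wise. The estimate $\sqrt{m_1}\,\|xz^\alpha\|_{\mathcal H^2(\mathscr B)} \leqslant \|X_\alpha(xz^\alpha)\|_{\mathcal H^2(\tilde{\mathscr B})} \leqslant \sqrt{m_2}\,\|xz^\alpha\|_{\mathcal H^2(\mathscr B)}$ derived in the theorem's proof from \eqref{similar-ineq} immediately yields that $X$ is bounded and invertible.

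For the reverse implication, the crucial observation is that such an $X$ automatically intertwines the multiplication operators, so similarity follows without further work. Indeed, because $C^{-1}$ acts only on coefficients while multiplication by $z_j$ merely shifts indices, for any $f \in \mathcal H^2(\mathscr B)$ and $j = 1, \ldots, d$ we have
\[
X(\mathscr M_{z_j} f)(z) \;=\; C^{-1}\bigl(z_j f(z)\bigr) \;=\; z_j\, C^{-1} f(z) \;=\; \mathscr M_{z_j}(Xf)(z).
\]
Combined with the assumed boundedness and invertibility of $X$, this gives $X \mathscr M_{z_j} = \mathscr M_{z_j} X$ for every $j$, which is precisely the similarity asserted.

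The only potential obstacle is a bookkeeping one: one must check that "application of $C^{-1}$ coefficient-wise on a formal power series" is genuinely well-defined as a map $\mathcal H^2(\mathscr B) \to \mathcal H^2(\tilde{\mathscr B})$, and that boundedness/invertibility of this map corresponds, through the defining norm $\|\sum x_\alpha z^\alpha\|^2_{\mathcal H^2(\mathscr B)} = \sum \|B_\alpha x_\alpha\|^2_H$, to the two-sided estimate \eqref{similar-ineq}. This equivalence is already implicit in the computation $\|X_\alpha(xz^\alpha)\|^2 = \|\tilde B_\alpha C^{-1} x\|^2$ inside the proof of Theorem \ref{similar-thm}, so the corollary requires no new ingredient beyond a careful rephrasing.
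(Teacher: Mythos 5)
Your proposal is correct and follows essentially the same route as the paper: the forward direction extracts $C$ from Theorem \ref{similar-thm} and reuses the diagonal operator $X f(z) = C^{-1} f(z)$ built in the converse half of that theorem's proof, while the reverse direction just notes that any such $X$ automatically intertwines the $\mathscr M_{z_j}$. Your added remarks on well-definedness and the coefficient-wise norm computation are sound but only spell out what the paper leaves implicit.
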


\begin{proof}
If $\mathscr M_z$ on $\mathcal H^2(\mathscr B)$ is similar to $\mathscr M_z$ on $\mathcal H^2(\tilde{\mathscr B})$, then by the proof of the preceding theorem we can construct a bounded invertible operator $X : \mathcal H^2(\mathscr B) \rar \mathcal H^2(\tilde{\mathscr B})$ given by $X f(z) = C^{-1} f(z)$ for all $f \in \mathcal H^2(\mathscr B)$. Clearly, $X$ satisfies $X \mathscr M_{z_j} = \mathscr M_{z_j} X$ for all $j = 1, \ldots, d$. The converse part is straight forward.
\end{proof}

The following corollary generalizes as well as provides an alternate verification of \cite[Theorem 5.8]{GKT1}. It also recovers all the criteria of unitary equivalence established in \cite{L} and \cite{S}.

\begin{corollary}\label{cor-2.3}
Let $H$, $\tilde H$ be two complex separable Hilbert spaces and $\mathscr B = \{B_\alpha\}_{\alpha \in \mathbb N^d}$, $\tilde{\mathscr B} = \{\tilde B_\alpha\}_{\alpha \in \mathbb N^d}$ be two multisequences of invertible operators on $H$ and $\tilde H$, respectively. Suppose that $\mathscr M_z = (\mathscr M_{z_1}, \ldots, \mathscr M_{z_d})$ is bounded on $\mathcal H^2(\mathscr B)$ and on $\mathcal H^2(\tilde{\mathscr B})$. Then the following statements are equivalent:
\begin{enumerate}
\item[(i)] $\mathscr M_z$ on $\mathcal H^2(\mathscr B)$ is unitarily equivalent to $\mathscr M_z$ on $\mathcal H^2(\tilde{\mathscr B})$.
\item[(ii)] There exists a unitary operator $V : \tilde H \to H$ such that 
\beqn
 {\tilde B_\alpha}^* \tilde B_\alpha = V^* B^*_\alpha B_\alpha V \quad \mbox{for all } \alpha \in \mathbb N^d.
\eeqn
\item[(iii)] There exists a unitary operator $V : \tilde H \to H$ such that the map $X : \mathcal H^2(\mathscr B) \rar \mathcal H^2(\tilde{\mathscr B})$, given by $X f(z) = V^* f(z)$ for all $f \in \mathcal H^2(\mathscr B)$, is unitary.
\end{enumerate}
\end{corollary}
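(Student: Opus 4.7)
The plan is to establish the equivalences via the cycle (iii) $\Rightarrow$ (i) $\Rightarrow$ (ii) $\Rightarrow$ (iii). The implication (iii) $\Rightarrow$ (i) is immediate, since $X$ is stipulated to be unitary and the one-line calculation $X \mathscr M_{z_j} f = V^*(z_j f) = z_j V^* f = \mathscr M_{z_j} X f$ confirms it intertwines the multiplication tuple.

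For (ii) $\Rightarrow$ (iii), I would define $X f := \sum_\alpha (V^* x_\alpha) z^\alpha$ and verify unitarity by a direct norm computation:
\[
\|X f\|^2_{\mathcal H^2(\tilde{\mathscr B})} = \sum_\alpha \|\tilde B_\alpha V^* x_\alpha\|^2 = \sum_\alpha \langle V \tilde B_\alpha^* \tilde B_\alpha V^* x_\alpha, x_\alpha\rangle = \sum_\alpha \|B_\alpha x_\alpha\|^2 = \|f\|^2_{\mathcal H^2(\mathscr B)},
\]
where the third equality uses the operator equation in (ii) together with $V V^* = I_H$. The symmetric map $g \mapsto V g$ furnishes a two-sided inverse isometry, so $X$ is unitary.

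The substantive implication is (i) $\Rightarrow$ (ii), for which I plan to specialize the proof of Theorem \ref{similar-thm} to the unitary setting. The key refinement is that the vanishing relation \eqref{X-0-alpha}, applied to both $X$ and to $X^{-1} = X^*$ (which also intertwines the $\mathscr M_{z_j}$ in the reverse direction), combined with the orthocomplement-preserving identity $X(M^\perp) = X(M)^\perp$ valid for any unitary $X$, forces $X(\mathcal H_\alpha) = \tilde{\mathcal H}_\alpha$ for every $\alpha \in \mathbb N^d$. Consequently, $X$ is block-diagonal with respect to \eqref{H-alpha-deco}, each restriction $X|_{\mathcal H_\alpha}\colon \mathcal H_\alpha \to \tilde{\mathcal H}_\alpha$ is itself unitary, and by the recursion \eqref{recursion} all blocks are determined by the initial block $X|_{\mathcal H_0}$. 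Under the natural identifications $\mathcal H_0 \cong H$ and $\tilde{\mathcal H}_0 \cong \tilde H$, this initial block supplies the desired unitary $V \colon \tilde H \to H$. In \eqref{first-ineq1} the middle factor $(P_{\tilde{\mathcal H}_\alpha} X|_{\mathcal H_\alpha})^* (P_{\tilde{\mathcal H}_\alpha} X|_{\mathcal H_\alpha})$ then equals the identity on $\mathcal H_\alpha$, so the two-sided inequality \eqref{similar-ineq} collapses to the equality $\tilde B_\alpha^* \tilde B_\alpha = V^* B_\alpha^* B_\alpha V$ for every $\alpha \in \mathbb N^d$.

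The main technical step I expect to grind through is the block-diagonal reduction: combining the vanishing relations \eqref{X-0-alpha} for both $X$ and $X^{-1}$ with the unitary orthocomplement identity to conclude that $X$ sends each fiber $\mathcal H_\alpha$ precisely onto $\tilde{\mathcal H}_\alpha$. Once this fiber-preservation is established, the remainder is a direct equality-case specialization of the proof of Theorem \ref{similar-thm}.
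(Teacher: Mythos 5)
Your overall strategy---specializing the proof of Theorem \ref{similar-thm} to a unitary intertwiner---is exactly what the paper intends (its own proof is a one-line deferral to that argument), and most of your steps are sound: (iii) $\Rightarrow$ (i) and (ii) $\Rightarrow$ (iii) are correct as written, and in (i) $\Rightarrow$ (ii) the block-diagonal reduction and the collapse of \eqref{first-ineq1} to an equality both work. One small imprecision: \eqref{X-0-alpha} for $X$ and $X^{-1}$ together with the orthocomplement identity only gives $X(\mathcal H_0)=\tilde{\mathcal H}_0$; to get $X(\mathcal H_\alpha)=\tilde{\mathcal H}_\alpha$ for all $\alpha$ you must propagate this along \eqref{X-alpha-alpha} by induction on $|\alpha|$ (or use $\mathcal H_\alpha=\mathscr M_z^\alpha\mathcal H_0$ together with the intertwining relation). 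That is easily repaired.

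The genuine gap is the last step, where you assert that the initial block supplies a \emph{unitary} $V:\tilde H\to H$ ``under the natural identifications $\mathcal H_0\cong H$ and $\tilde{\mathcal H}_0\cong\tilde H$.'' Those identifications are not isometric: the norm of $xz^0$ in $\mathcal H^2(\mathscr B)$ is $\|B_0x\|_H$, not $\|x\|_H$. Unitarity of $X|_{\mathcal H_0}:\mathcal H_0\to\tilde{\mathcal H}_0$ therefore only yields an \emph{invertible} $V=C:\tilde H\to H$ with $V^*B_0^*B_0V=\tilde B_0^*\tilde B_0$ (the $\alpha=0$ instance of the desired identity), not $V^*V=I_{\tilde H}$. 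This cannot be repaired under the stated hypotheses: take $d=1$, $H=\tilde H=\mathbb C$, $B_\alpha=1$ and $\tilde B_\alpha=2$ for all $\alpha$. Then $f\mapsto \tfrac12 f$ is a unitary intertwiner, so (i) holds, yet (ii) would require a unimodular scalar $v$ with $4=|v|^2\cdot 1$. What your argument actually establishes is the statement with ``unitary $V$'' replaced by ``bounded invertible $V$''; the operator $V$ comes out unitary precisely when $B_0,\tilde B_0$ are unitary, e.g.\ under the normalization $B_0=\tilde B_0=I$ implicit in the weighted-shift models of \cite{L}, \cite{S} and in \cite[Proposition 2.3]{GKT1}. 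So the defect lies as much in the statement of the corollary as in your write-up, but as written the step ``this initial block supplies the desired unitary $V$'' is unjustified, and you should either add that normalization as a hypothesis or weaken ``unitary'' to ``invertible'' in (ii).
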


\begin{proof}
The equivalence of (i) and (ii) is immediate from the proof of Theorem \ref{similar-thm}, whereas that of (i) and (iii) goes along the lines of the proof of the preceding corollary.
\end{proof}

\section{Applications to reproducing kernel Hilbert space}

In this section, we apply Theorem \ref{similar-thm} to deduce the similarity between two commuting tuples of multiplication operators on certain reproducing kernel Hilbert spaces with diagonal kernels in which the polynomials are dense. We proceed by recalling a few basic facts about such reproducing kernel Hilbert spaces (for more details, see \cite{S, CS, PR}).

Let $\Omega$ be a connected open subset of $\mathbb C^d$ containing 0 and $E$ be a complex separable Hilbert space. Let $\mathcal H(\kappa)$ be the reproducing kernel Hilbert space of $E$-valued holomorphic functions defined on $\Omega$, where the reproducing kernel $\kappa$ is of the form
\beqn
\kappa(z,w) = \sum_{\alpha\in \mathbb N^d} C_\alpha z^\alpha \bar{w}^\alpha, \ z, w \in \Omega;
\eeqn
with $C_\alpha$ being bounded positive invertible operators on $E$. By a {\it diagonal kernel} we mean a reproducing kernel $\kappa$ of the above form. Moreover, if $\mathscr M_{z_j}$ is bounded on $\mathcal H(\kappa)$ for all $j = 1, \ldots, d$, then $\mathscr M_z = (\mathscr M_{z_1}, \ldots, \mathscr M_{z_d})$ on $\mathcal H(\kappa)$ is unitarily equivalent to $\mathscr M_z$ on  
$\mathcal H^2(\mathscr B)$, where $\mathscr B = \{B_\alpha\}_{\alpha \in \mathbb N^d}$ with $B_\alpha = C_\alpha^{-1/2}$ for all $\alpha \in \mathbb N^d$.

The following result shows that perturbing finitely many operator coefficients of $\kappa$ does not affect the similarity of $\mathscr M_z$.

\begin{proposition}
Let $E$ be a complex separable Hilbert space and $\Omega$  be a connected open subset of $\mathbb C^d$ containing 0. Let $\mathcal H(\kappa)$ and $\mathcal H(\tilde\kappa)$ be the  reproducing kernel Hilbert spaces of $E$-valued holomorphic functions defined on $\Omega$, where $\kappa$ and $\tilde\kappa$ are given by
\beqn   
\kappa(z,w) &=& \sum_{\alpha\in \mathbb N^d} C_\alpha z^\alpha \bar{w}^\alpha, \quad z, w \in \Omega;\\ 
\tilde\kappa(z,w) &=& \sum_{\underset{|\alpha| \leqslant n}{\alpha \in \mathbb N^d}} D_\alpha z^\alpha \bar{w}^\alpha + \sum_{\underset{|\alpha| \geqslant n+1}{\alpha \in \mathbb N^d}} C_\alpha z^\alpha \bar{w}^\alpha, \quad z, w \in \Omega;\eeqn 
with $C_\alpha$, $D_\alpha$ being positive invertible operators on $E$. If $\mathscr M_z$ is bounded on $\mathcal H(\kappa)$, then the following statements are true:
\begin{itemize}
\item[(i)] $\mathscr M_z$ is bounded on $\mathcal H(\tilde\kappa)$.
\item[(ii)] $\mathscr M_z$ on $\mathcal H(\kappa)$ is similar to $\mathscr M_z$ on $\mathcal H(\tilde\kappa)$.
\end{itemize}
\end{proposition}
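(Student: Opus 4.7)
The plan is to transfer everything to the formal power series model and then apply Theorem \ref{similar-thm} with $C = I_E$. By the unitary equivalence recalled just before the statement, $\mathscr{M}_z$ on $\mathcal{H}(\kappa)$ (resp.\ on $\mathcal{H}(\tilde{\kappa})$) corresponds to $\mathscr{M}_z$ on $\mathcal{H}^2(\mathscr{B})$ (resp.\ on $\mathcal{H}^2(\tilde{\mathscr{B}})$) with $H = \tilde{H} = E$, $B_\alpha = C_\alpha^{-1/2}$ for every $\alpha \in \mathbb{N}^d$, while $\tilde{B}_\alpha = D_\alpha^{-1/2}$ for $|\alpha| \le n$ and $\tilde{B}_\alpha = C_\alpha^{-1/2}$ for $|\alpha| \ge n+1$. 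So it suffices to prove both statements in this model.

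The only nontrivial step is to produce uniform constants $m_1, m_2 > 0$ with $m_1 B_\alpha^* B_\alpha \le \tilde{B}_\alpha^* \tilde{B}_\alpha \le m_2 B_\alpha^* B_\alpha$ for every $\alpha \in \mathbb{N}^d$. For the tail $|\alpha| \ge n+1$ the weights coincide, so the inequality is trivially satisfied with constants $1$. For each of the finitely many remaining indices both $B_\alpha^* B_\alpha = C_\alpha^{-1}$ and $\tilde{B}_\alpha^* \tilde{B}_\alpha = D_\alpha^{-1}$ are bounded positive invertible operators on $E$; conjugating the desired inequality by $C_\alpha^{1/2}$ reduces it to bounding the positive invertible operator $C_\alpha^{1/2} D_\alpha^{-1} C_\alpha^{1/2}$ between two positive scalar multiples of $I_E$, which is immediate. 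Taking the minimum of the resulting lower constants (together with $1$) and the maximum of the upper constants (together with $1$) over the finite index set $\{\alpha : |\alpha| \le n\}$ yields the required uniform $m_1, m_2$.

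With this inequality in hand, part (i) is a free consequence: the estimate shows that $\mathcal{H}^2(\mathscr{B})$ and $\mathcal{H}^2(\tilde{\mathscr{B}})$ coincide as sets of $E$-valued formal power series and carry equivalent norms, so the identity map $\mathcal{H}^2(\mathscr{B}) \to \mathcal{H}^2(\tilde{\mathscr{B}})$ is a bounded invertible intertwiner for the multiplication tuple, and hence boundedness of $\mathscr{M}_z$ transfers from one space to the other. Part (ii) is then immediate from Theorem \ref{similar-thm} applied with $C = I_E$ and the constants $m_1, m_2$ just produced. No serious obstacle is anticipated; the only technical point is keeping uniformity of the constants, which is unproblematic since only finitely many indices are perturbed.
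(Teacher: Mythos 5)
Your proof is correct, and for part (ii) it is essentially identical to the paper's: pass to the $\mathcal H^2(\mathscr B)$, $\mathcal H^2(\tilde{\mathscr B})$ model with $B_\alpha = C_\alpha^{-1/2}$, get uniform constants by taking the min and max (together with $1$) of $1/\|C_\alpha^{-1/2}D_\alpha C_\alpha^{-1/2}\|$ and $\|C_\alpha^{1/2}D_\alpha^{-1}C_\alpha^{1/2}\|$ over the finitely many perturbed indices, and apply Theorem \ref{similar-thm} with $C = I_E$. Where you genuinely diverge is part (i): the paper proves boundedness on $\mathcal H(\tilde\kappa)$ intrinsically, by quoting the positivity criterion for $(\delta - z_j\bar w_j)\kappa$ and \cite[Lemma 4.1(c)]{CS} to reduce boundedness of $\mathscr M_{z_j}$ to $\sup_{\alpha}\|C_\alpha^{-1/2}C_{\alpha-\varepsilon_j}C_\alpha^{-1/2}\| < \infty$, a condition visibly unaffected by a finite perturbation; you instead observe that the operator inequality already forces $\mathcal H^2(\mathscr B)$ and $\mathcal H^2(\tilde{\mathscr B})$ to coincide as sets with equivalent norms, so the identity map is a bounded invertible intertwiner and boundedness transfers for free. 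Your route is shorter and avoids the external citations, at the cost of one point worth making explicit: the unitary identification of $\mathcal H(\tilde\kappa)$ with $\mathcal H^2(\tilde{\mathscr B})$ must be justified at the level of Hilbert spaces (orthogonality and norms of monomials from the diagonal form of $\tilde\kappa$) without presupposing boundedness of $\mathscr M_z$ there, since that is exactly what (i) asserts; the paper's intrinsic criterion sidesteps this. The paper's version of (i) also yields a reusable characterization of boundedness in terms of the kernel coefficients, which your argument does not produce.
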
 

\begin{proof}
To see (i), let $j \in \{1, \ldots, d\}$. Note that $\mathscr M_{z_j}$ is bounded on $\mathcal H(\kappa)$ if and only if there exists a positive constant $\delta$ such that $(\delta - z_j \bar{w}_j) \kappa(z,w)$ is a positive definite kernel (refer to \cite[Theorem 6.28]{PR}). It follows from \cite[Lemma 4.1(c)]{CS} that $\delta C_\alpha - C_{\alpha-\varepsilon_j} \geqslant 0$ for all $\alpha \in \mathbb N^d$. Consequently, we get that $\mathscr M_{z_j}$ is bounded on $\mathcal H(\kappa)$ if and only if $\sup_{\alpha \in \mathbb N^d} \|C_\alpha^{-1/2} C_{\alpha-\varepsilon_j} C_\alpha^{-1/2}\| < \infty$. Here we follow the convention that $C_{\alpha-\varepsilon_j}=0$ if $\alpha_j=0$. This establishes that the boundedness of $\mathscr M_z$ on $\mathcal H(\kappa)$ is equivalent to that of $\mathscr M_z$ on $\mathcal H(\tilde\kappa)$.  

For the proof of (ii), suppose that $\mathscr M_z$ is bounded on $\mathcal H(\kappa)$. Note that $\mathscr M_z$ on $\mathcal H(\kappa)$ is unitarily equivalent to $\mathscr M_z$ on $\mathcal H^2(\mathscr B)$ and $\mathscr M_z$ on $\mathcal H(\tilde\kappa)$ is unitarily equivalent to $\mathscr M_z$ on $\mathcal H^2(\tilde{\mathscr B})$, where $\mathscr B = \{B_\alpha\}_{\alpha \in \mathbb N^d}$, $\tilde{\mathscr B} = \{\tilde B_\alpha\}_{\alpha \in \mathbb N^d}$ with 
\[ B_\alpha =C_\alpha^{-1/2} \mbox{ for all } \alpha \in \mathbb N^d \ \mbox{~and~}\; \tilde B_\alpha  =\begin{cases} D_\alpha^{-1/2} & \mbox{~if~}  |\alpha| \leqslant n,\\
C_\alpha^{-1/2} &  \mbox{~if~} |\alpha| \geqslant n+1.
\end{cases}\]
By choosing  
\beqn
c_1 &=& \min\left\{\frac{1}{\|C_\alpha^{-1/2} D_\alpha C_\alpha^{-1/2}\|} : |\alpha| \leqslant n\right\} \mbox{ and}\\
c_2 &=& \max\Big\{\|C_\alpha^{1/2} D_\alpha^{-1} C_\alpha^{1/2}\| : |\alpha| \leqslant n\Big\},
\eeqn
we obtain that
\beqn
c_1 C_\alpha^{-1} \leqslant D_\alpha^{-1} \leqslant c_2 C_\alpha^{-1} \mbox{ for all } \alpha \in \mathbb N^d \mbox{ with } |\alpha| \leqslant n.
\eeqn
Now if we take $m_1 = \min\{1, c_1\}$, $m_2 = \max\{1, c_2\}$ and $C=I$, then Theorem \ref{similar-thm} establishes the similarity between $\mathscr M_z$  on $\mathcal H^2(\mathscr B)$ and $\mathscr M_z$  on $\mathcal H^2(\tilde{\mathscr B})$. This completes the proof.
\end{proof}

In the preceding proposition, $\mathscr M_z$ on $\mathcal H(\kappa)$ is not unitarily equivalent to $\mathscr M_z$ on $\mathcal H(\tilde\kappa)$, in general (see Corollary \ref{cor-2.3}). The following proposition provides another class of reproducing kernel Hilbert spaces in which the similarity between two tuples of multiplication operators can be deduced through Theorem \ref{similar-thm}.

\begin{proposition}\label{prop-2}
Let $\lambda,\ \mu,\ \tilde\lambda,\ \tilde\mu$ be $($strictly$)$ positive real numbers not necessarily distinct. Let $\mathcal H(\kappa)$ and $\mathcal H(\tilde\kappa)$ be the reproducing kernel Hilbert spaces of $\mathbb C^2$-valued holomorphic functions defined on the open unit $($euclidean$)$ ball $\mathbb B_d$ of $\mathbb C^d$, where $\kappa$ and $\tilde\kappa$ are given by
\beqn
\kappa(z,w) &=& \left(\begin{matrix}
\frac{1}{(1-\inp{z}{w})^\lambda} & 0\\
&\vspace{-.3cm}\\
0 & \frac{1}{(1-\inp{z}{w})^\mu} \end{matrix}\right),\quad z, w \in \mathbb B_d;\\
\tilde\kappa(z,w) &=& \left(\begin{matrix}
\frac{1}{(1-\inp{z}{w})^{\tilde\lambda}} & 0\\
&\vspace{-.3cm}\\
0 & \frac{1}{(1-\inp{z}{w})^{\tilde\mu}} \end{matrix}\right),\quad z, w \in \mathbb B_d.
\eeqn 
Then $\mathscr M_z$ on $\mathcal H(\kappa)$ is similar to $\mathscr M_z$ on $\mathcal H(\tilde\kappa)$ if and only if $\{\lambda, \mu\} = \{\tilde\lambda, \tilde\mu\}$.
\end{proposition}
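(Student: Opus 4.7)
The plan is to pull the question back to the multishift model via the unitary equivalence noted at the start of the section and then apply Theorem~\ref{similar-thm}. Using the expansion
\beqn
\frac{1}{(1-\inp{z}{w})^\lambda} \;=\; \sum_{\alpha \in \mathbb N^d} \frac{(\lambda)_{|\alpha|}}{\alpha!}\, z^\alpha \bar w^\alpha,\qquad (\lambda)_n := \lambda(\lambda+1)\cdots(\lambda+n-1),
\eeqn
one reads off $B_\alpha^* B_\alpha = \alpha!\,\mathrm{diag}(1/(\lambda)_{|\alpha|},\,1/(\mu)_{|\alpha|})$ and likewise for $\tilde B_\alpha^* \tilde B_\alpha$; the $\alpha!$ factors cancel in the similarity inequality of Theorem~\ref{similar-thm}, leaving a family of $2\times 2$ matrix inequalities indexed only by $n = |\alpha|$.

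For the ``if'' direction, if $\{\lambda,\mu\} = \{\tilde\lambda,\tilde\mu\}$, I would take $C$ to be either $I$ or the $2\times 2$ coordinate-swap matrix; in either case $C^* B_\alpha^* B_\alpha C = \tilde B_\alpha^* \tilde B_\alpha$ exactly, and Theorem~\ref{similar-thm} yields similarity with $m_1 = m_2 = 1$.

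For the converse, I would first reorder the components of $\mathbb C^2$ in each space so that $\lambda \leqslant \mu$ and $\tilde\lambda \leqslant \tilde\mu$; such reorderings merely conjugate the intertwiner by permutation matrices and so do not affect the conclusion. Theorem~\ref{similar-thm} then supplies an invertible $C \in M_2(\mathbb C)$ and $m_1,m_2 > 0$ with $m_1 C^* D_n C \leqslant \tilde D_n \leqslant m_2 C^* D_n C$ for every $n \in \mathbb N$, where $D_n = \mathrm{diag}(1/(\lambda)_n,1/(\mu)_n)$ and $\tilde D_n = \mathrm{diag}(1/(\tilde\lambda)_n,1/(\tilde\mu)_n)$. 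My strategy is to extract two scalar consequences. Taking determinants, together with $\det A \leqslant \det B$ for $0 \leqslant A \leqslant B$, gives that $(\lambda)_n(\mu)_n$ is bounded above and below by positive constant multiples of $(\tilde\lambda)_n(\tilde\mu)_n$; the Stirling asymptotic $(\lambda)_n \sim n^{\lambda}\Gamma(n)/\Gamma(\lambda)$ then forces $\lambda+\mu = \tilde\lambda+\tilde\mu$. Taking traces, and using that the two row norms of $C$ are strictly positive (by invertibility), yields the comparability
\beqn
\frac{1}{(\tilde\lambda)_n} + \frac{1}{(\tilde\mu)_n} \;\asymp\; \frac{|c_{11}|^2+|c_{12}|^2}{(\lambda)_n} + \frac{|c_{21}|^2+|c_{22}|^2}{(\mu)_n}.
\eeqn
When $\lambda < \mu$, the dominant term on each side singles out $1/(\tilde\lambda)_n \asymp 1/(\lambda)_n$, forcing $\lambda = \tilde\lambda$, and the determinant identity then delivers $\mu = \tilde\mu$. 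The degenerate case $\lambda = \mu$ is handled separately: there $C^* D_n C = (1/(\lambda)_n)\,C^* C$ with $C^*C$ sandwiched between positive multiples of $I$, which collapses the inequality to $\tilde\lambda = \tilde\mu = \lambda$.

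The main obstacle I expect is ensuring the asymptotic analysis actually pins down the individual parameters and not merely their sum, which would leave the conclusion one equation short; the use of $\det$ and $\mathrm{tr}$ sidesteps any entanglement with the off-diagonal entries of $C^* D_n C$, and the invertibility of $C$ rules out the degenerate vanishing of the row-norm coefficients that would otherwise break the dominant-term comparison.
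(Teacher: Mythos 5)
Your proof is correct, and the converse is argued by a genuinely different route than the paper's. Both proofs reduce to the same family of $2\times 2$ matrix inequalities via Theorem~\ref{similar-thm} and both ultimately rest on the asymptotic $(\tilde\lambda)_n/(\lambda)_n \sim n^{\tilde\lambda-\lambda}$, but where the paper extracts the \emph{diagonal entries} of the sandwich inequality (and of its conjugate by $C^{-1}$) and then runs a case analysis on whether $c_{11}$ and $c_{22}$ vanish --- Case I forcing $\lambda=\tilde\lambda$, $\mu=\tilde\mu$ and Case II forcing the swap --- you instead take the two basis-independent scalar invariants $\det$ and $\mathrm{tr}$. The determinant comparison (legitimate, since $0\leqslant A\leqslant B$ implies $\det A\leqslant\det B$) pins down $\lambda+\mu=\tilde\lambda+\tilde\mu$ with no reference to the entries of $C$, and the trace comparison, after the harmless WLOG ordering $\lambda\leqslant\mu$, $\tilde\lambda\leqslant\tilde\mu$, identifies the smaller parameters because $\mathrm{tr}(C^*D_nC)=r_1/(\lambda)_n+r_2/(\mu)_n\asymp 1/(\lambda)_n$ with $r_1>0$ by invertibility of $C$; together these two scalar equations recover the unordered pair. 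What your approach buys is the complete elimination of the case split and of any entanglement with off-diagonal entries of $C$; what the paper's approach buys is slightly more information in passing (it identifies \emph{which} of the two matchings, identity or swap, the intertwiner realizes, according to the zero pattern of $C$). Two cosmetic points: your separate treatment of $\lambda=\mu$ is not actually needed, since the two-sided bound $r_1/(\lambda)_n\leqslant r_1/(\lambda)_n+r_2/(\mu)_n\leqslant(r_1+r_2)/(\lambda)_n$ already holds when $\lambda=\mu$; and, like the paper, you should at least record that $\mathscr M_z$ is bounded on these spaces so that Theorem~\ref{similar-thm} applies.
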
 

\begin{proof}
Routine calculations show $\mathscr M_z$ is bounded on $\mathcal H(\kappa)$ and on $\mathcal H(\tilde\kappa)$. Further,  
\beqn
\kappa(z,w) &=& \sum_{\alpha \in \mathbb N^d} \frac{1}{\alpha!}\left(\begin{matrix}
(\lambda)_{|\alpha|} & 0\\
&\vspace{-.3cm}\\
0 & (\mu)_{|\alpha|} \end{matrix}\right) z^\alpha \bar{w}^\alpha ,\quad z, w \in \mathbb B_d;\\
\tilde\kappa(z,w) &=& \sum_{\alpha \in \mathbb N^d} \frac{1}{\alpha!}\left(\begin{matrix}
(\tilde\lambda)_{|\alpha|} & 0\\
&\vspace{-.3cm}\\
0 & (\tilde\mu)_{|\alpha|} \end{matrix}\right) z^\alpha \bar{w}^\alpha,\quad z, w \in \mathbb B_d,
\eeqn
where for $x>0$ and $n \in \mathbb N$, $(x)_n := \frac{\Gamma(x+n)}{\Gamma(x)}$. Thus $\mathscr M_z$ on $\mathcal H(\kappa)$ is unitarily equivalent to $\mathscr M_z$ on $\mathcal H^2(\mathscr B)$ and $\mathscr M_z$ on $\mathcal H(\tilde\kappa)$ is unitarily equivalent to $\mathscr M_z$ on $\mathcal H^2(\tilde{\mathscr B})$, where $\mathscr B = \{B_\alpha\}_{\alpha \in \mathbb N^d}$, $\tilde{\mathscr B} = \{\tilde B_\alpha\}_{\alpha \in \mathbb N^d}$ with 
\beqn
(B^*_\alpha B_\alpha)^{-1} = \frac{1}{\alpha!}\left(\begin{matrix}
(\lambda)_{|\alpha|} & 0\\
&\vspace{-.3cm}\\
0 & (\mu)_{|\alpha|} \end{matrix}\right)\ \mbox{ and }\ (\tilde B^*_\alpha \tilde B_\alpha)^{-1} = \frac{1}{\alpha!}\left(\begin{matrix}
(\tilde\lambda)_{|\alpha|} & 0\\
&\vspace{-.3cm}\\
0 & (\tilde\mu)_{|\alpha|} \end{matrix}\right) 
\eeqn  
for all $\alpha \in \mathbb N^d$.

Suppose that $\{\lambda, \mu\} = \{\tilde\lambda, \tilde\mu\}$. If $\lambda = \tilde\lambda$ and $\mu = \tilde\mu$, then clearly $\mathscr M_z$ on $\mathcal H(\kappa)$ is similar to $\mathscr M_z$ on $\mathcal H(\tilde\kappa)$. Let $\lambda = \tilde\mu$ and $\mu = \tilde\lambda$. Then take $m_1 = m_2 = 1$ and $C = \left(\begin{matrix}
0 & 1\\
&\vspace{-.3cm}\\
1 & 0 \end{matrix}\right)$. It now immediately follows from Theorem \ref{similar-thm} that $\mathscr M_z$ on $\mathcal H(\kappa)$ is similar to $\mathscr M_z$ on $\mathcal H(\tilde\kappa)$.

Conversely, suppose that $\mathscr M_z$ on $\mathcal H(\kappa)$ is similar to $\mathscr M_z$ on $\mathcal H(\tilde\kappa)$. Hence, $\mathscr M_z$ on $\mathcal H^2(\mathscr B)$ is similar to $\mathscr M_z$ on $\mathcal H^2(\tilde{\mathscr B})$. Therefore, by Theorem \ref{similar-thm}, there exists an invertible matrix $C = \left(\begin{matrix}
c_{11} & c_{12}\\
&\vspace{-.3cm}\\
c_{21} & c_{22} \end{matrix}\right)$ and positive constants $m_1$, $m_2$ such that for all $n \in \mathbb N$,
\beq\label{eq-12}
m_1 \left(\begin{matrix}
(\lambda)_n & 0\\
&\vspace{-.3cm}\\
0 & (\mu)_n \end{matrix}\right) \leqslant C \left(\begin{matrix}
(\tilde\lambda)_n & 0\\
&\vspace{-.3cm}\\
0 & (\tilde\mu)_n \end{matrix}\right) C^* \leqslant m_2 \left(\begin{matrix}
(\lambda)_n & 0\\
&\vspace{-.3cm}\\
0 & (\mu)_n \end{matrix}\right).
\eeq
The last inequality of \eqref{eq-12} gives that 
\begin{align} 
 \label{eq-13}
\left. \arraycolsep=1pt\def\arraystretch{1.4}
 \begin{array}{cc}
 &  |c_{11}|^2 (\tilde\lambda)_n + |c_{12}|^2 (\tilde\mu)_n \leqslant m_2 (\lambda)_n,  \\ 
 &  |c_{21}|^2 (\tilde\lambda)_n + |c_{22}|^2 (\tilde\mu)_n \leqslant m_2 (\mu)_n,  
 \end{array}
\right\}
n \in \mathbb N.
\end{align}
From the first inequality of \eqref{eq-12}, we get
\beqn
m_1 C^{-1}\left(\begin{matrix}
(\lambda)_n & 0\\
&\vspace{-.3cm}\\
0 & (\mu)_n \end{matrix}\right) C^{*-1} \leqslant \left(\begin{matrix}
(\tilde\lambda)_n & 0\\
&\vspace{-.3cm}\\
0 & (\tilde\mu)_n \end{matrix}\right) \mbox{ for all } n \in \mathbb N.
\eeqn
This gives that 
\begin{align} 
 \label{eq-14}
\left. \arraycolsep=1pt\def\arraystretch{1.4}
 \begin{array}{cc}
 & \frac{m_1}{|\det(C)|^2}\big( |c_{22}|^2 (\lambda)_n + |c_{12}|^2 (\mu)_n\big) \leqslant (\tilde\lambda)_n,  \\ 
 & \frac{m_1}{|\det(C)|^2}\big( |c_{21}|^2 (\lambda)_n + |c_{11}|^2 (\mu)_n\big) \leqslant (\tilde\mu)_n,  
 \end{array}
\right\}
n \in \mathbb N.
\end{align}
Now we consider the following two cases:\\
{\bf Case I:} Both $c_{11}$ and $c_{22}$ are non-zero.\\
Then by \eqref{eq-13} and \eqref{eq-14}, we get
\begin{align*} 
\left. \arraycolsep=1pt\def\arraystretch{1.4}
 \begin{array}{cc} 
& \frac{m_1}{|\det(C)|^2} |c_{22}|^2 (\lambda)_n \leqslant (\tilde\lambda)_n \leqslant \frac{m_2}{|c_{11}|^2} (\lambda)_n, \\ & \frac{m_1}{|\det(C)|^2} |c_{11}|^2 (\mu)_n \leqslant (\tilde\mu)_n \leqslant \frac{m_2}{|c_{22}|^2} (\mu)_n, 
\end{array}
\right\}
n \in \mathbb N.
\end{align*}
Since $\frac{(\tilde{\lambda})_n}{(\lambda)_n}\sim n^{\tilde{\lambda}-\lambda}$ as $n\to \infty$ (see \cite[p. 257 (6.1.46)]{AS}), it follows from above that $\lambda = \tilde\lambda$. By a similar argument, we get $\mu = \tilde \mu$.\\
{\bf Case II:} One of $c_{11}$ and $c_{22}$ is zero.\\
In this case, both $c_{12}$ and $c_{21}$ must be non-zero as $C$ is invertible. Hence, again by \eqref{eq-13} and \eqref{eq-14}, we obtain 
\begin{align*} 
\left. \arraycolsep=1pt\def\arraystretch{1.4}
 \begin{array}{cc} 
& \frac{m_1}{|\det(C)|^2} |c_{21}|^2 (\lambda)_n \leqslant (\tilde\mu)_n \leqslant \frac{m_2}{|c_{12}|^2} (\lambda)_n, \\ & \frac{m_1}{|\det(C)|^2} |c_{12}|^2 (\mu)_n \leqslant (\tilde\lambda)_n \leqslant \frac{m_2}{|c_{21}|^2} (\mu)_n, 
\end{array}
\right\}
n \in \mathbb N.
\end{align*}
Again, using the argument as in the Case I, we get that $\tilde\lambda = \mu$ and $\tilde\mu = \lambda$. This completes the proof.
\end{proof}

We conclude this paper by characterizing the similarity between two $d$-tuples of $\mathcal U(d)$-homogeneous multiplication operators acting on the reproducing kernel Hilbert spaces determined by $n \times n$ matrix-valued diagonal kernels defined on the open unit ball in $\mathbb C^d$.

Let $\mathcal U(d)$ denote the group of $d \times d$ unitary matrices and $E$ be a complex separable Hilbert space. Recall that a commuting $d$-tuple $\mathscr M_z = (\mathscr M_{z_1}, \ldots, \mathscr M_{z_d})$ of multiplication operators on a reproducing kernel Hilbert space $\mathcal H(\kappa)$ of $E$-valued holomorphic functions defined on the open unit ball $\mathbb B_d$ of $\mathbb C^d$ is said to be $\mathcal U(d)$-{\it homogeneous} (also known as {\it spherical}, see \cite{CY}) if for every $u = (u_{ij}) \in \mathcal U(d)$ there is a unitary operator $\Gamma(u)$ on $\mathcal H(\kappa)$ such that $\Gamma(u)\mathscr M_{z_j} = (u\cdot\mathscr M_z)_j \Gamma(u)$ for all $j = 1, \ldots, d$, where
\[(u\cdot\mathscr M_z)_j = \sum_{k=1}^d u_{jk} \mathscr M_{z_k}, \quad j = 1, \ldots, d.\]
It follows from \cite[Lemma 2.3]{GKMP} that $\mathscr M_z$ on $\mathcal H(\kappa)$ is $\mathcal U(d)$-homogeneous if and only if $\kappa$ is quasi-invariant under $\mathcal U(d)$ in the sense that for each $u \in \mathcal U(d)$, there exists a unitary operator $J_u$ on $E$ such that $\kappa(z,w) = J_u\, \kappa(u\cdot z, u\cdot w)\, J_u^*$ for all $z,w \in \mathbb B_d$. In the case when $\kappa$ is scalar-valued, it turns out that $\mathscr M_z$ on $\mathcal H(\kappa)$ is $\mathcal U(d)$-homogeneous if and only if $\kappa$ is $\mathcal U(d)$-invariant (see \cite[Theorem 2.12]{CY}). In view of this, note that $\mathscr M_z$ appearing in the Proposition \ref{prop-2} are $\mathcal U(d)$-homogeneous. If $\mathscr M_z$ is $\mathcal U(d)$-homogeneous on $\mathcal H(\kappa)$ with $\kappa$ being a diagonal kernel, then by \cite[Theorem 4.10]{GKMP}, the kernel $\kappa$ has the form
\beqn
\kappa(z,w) = \sum_{m \in \mathbb N} A_m \inp{z}{w}^m, \quad z, w \in \mathbb B_d,
\eeqn
where $\{A_m\}_{m \in \mathbb N}$ is a sequence of positive definite $n \times n$ matrices. 



\begin{proposition}
Let $n$ be a positive integer. For the sequences $\{A_m\}_{m \in \mathbb N}$ and $\{\tilde A_m\}_{m \in \mathbb N}$ of $n \times n$ positive invertible matrices, let $\kappa$ and $\tilde\kappa$ be the reproducing kernels given by  
\beqn
\kappa(z,w) &=& \sum_{m \in \mathbb N} A_m \inp{z}{w}^m, \quad z, w \in \mathbb B_d,\\
\tilde\kappa(z,w) &=& \sum_{m \in \mathbb N} \tilde A_m \inp{z}{w}^m, \quad z, w \in \mathbb B_d.
\eeqn
Let $\mathcal H(\kappa)$ and $\mathcal H(\tilde\kappa)$ be the reproducing kernel Hilbert spaces of $\mathbb C^n$-valued holomorphic functions defined on the open unit ball $\mathbb B_d$ of $\mathbb C^d$, determined by $\kappa$ and $\tilde\kappa$ respectively. Suppose that $\mathscr M_z$ is bounded on $\mathcal H(\kappa)$ and on $\mathcal H(\tilde\kappa)$. Then $\mathscr M_z$ on $\mathcal H(\kappa)$ is similar to $\mathscr M_z$ on $\mathcal H(\tilde\kappa)$ if and only if there exist positive constants $m_1, m_2$ and an invertible $n \times n$ matrix $C$ such that 
\beqn
m_1 C^* A_m C \leqslant \tilde A_m \leqslant m_2 C^* A_m C \quad \mbox{for all } m \in \mathbb N,
\eeqn 
\end{proposition}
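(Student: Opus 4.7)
The plan is to reduce the statement to Theorem \ref{similar-thm} via the $\mathcal H^2(\mathscr B)$ model of a multiplication tuple on a diagonal-kernel space. The first step is to put each of $\kappa$ and $\tilde\kappa$ in the standard monomial form $\sum_\alpha C_\alpha z^\alpha \bar w^\alpha$. The multinomial expansion $\inp{z}{w}^m = \sum_{|\alpha|=m} \frac{m!}{\alpha!}\, z^\alpha \bar w^\alpha$ immediately gives
\[
\kappa(z,w) = \sum_{\alpha \in \mathbb N^d} \frac{|\alpha|!}{\alpha!}\, A_{|\alpha|}\, z^\alpha \bar w^\alpha, \qquad \tilde\kappa(z,w) = \sum_{\alpha \in \mathbb N^d} \frac{|\alpha|!}{\alpha!}\, \tilde A_{|\alpha|}\, z^\alpha \bar w^\alpha.
\]
By the equivalence recalled at the beginning of this section, $\mathscr M_z$ on $\mathcal H(\kappa)$ (resp.\ $\mathcal H(\tilde\kappa)$) is then unitarily equivalent to $\mathscr M_z$ on $\mathcal H^2(\mathscr B)$ (resp.\ $\mathcal H^2(\tilde{\mathscr B})$), where the invertible $n \times n$ matrix multisequences $\mathscr B$ and $\tilde{\mathscr B}$ may be chosen so that
\[
B_\alpha^* B_\alpha = \frac{\alpha!}{|\alpha|!}\, A_{|\alpha|}^{-1}, \qquad \tilde B_\alpha^* \tilde B_\alpha = \frac{\alpha!}{|\alpha|!}\, \tilde A_{|\alpha|}^{-1}, \qquad \alpha \in \mathbb N^d.
\]

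Next, I apply Theorem \ref{similar-thm}. The two multiplication tuples are similar if and only if there exist an invertible $n\times n$ matrix $D$ and positive constants $\mu_1, \mu_2$ with $\mu_1 D^* B_\alpha^* B_\alpha D \leqslant \tilde B_\alpha^* \tilde B_\alpha \leqslant \mu_2 D^* B_\alpha^* B_\alpha D$ for every $\alpha \in \mathbb N^d$. Substituting the previous display and cancelling the uniform positive scalar $\alpha!/|\alpha|!$ reduces this to
\[
\mu_1 D^* A_{|\alpha|}^{-1} D \leqslant \tilde A_{|\alpha|}^{-1} \leqslant \mu_2 D^* A_{|\alpha|}^{-1} D, \qquad \alpha \in \mathbb N^d,
\]
a condition which depends only on the length $|\alpha|$ and so may be restated over $m \in \mathbb N$.

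The final step is to transfer the inequality from $A_m^{-1}$ to $A_m$. For positive invertible matrices, $P \leqslant Q$ is equivalent to $Q^{-1} \leqslant P^{-1}$, and $(D^* A_m^{-1} D)^{-1} = D^{-1} A_m (D^*)^{-1}$. Setting $C := (D^*)^{-1}$, $m_1 := 1/\mu_2$, and $m_2 := 1/\mu_1$ then converts the display above into $m_1 C^* A_m C \leqslant \tilde A_m \leqslant m_2 C^* A_m C$ for all $m \in \mathbb N$, as claimed. Every step, including the inversion, is reversible under the standing invertibility hypotheses, so the converse direction is obtained by running the same argument backwards. No significant obstacle is foreseen; the only subtle piece of bookkeeping is the order-reversing inversion that interchanges the roles of $C$ and $(D^*)^{-1}$ and swaps $m_1$ with $m_2$.
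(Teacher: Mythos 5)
Your proposal is correct and follows essentially the same route as the paper: expand $\inp{z}{w}^m$ multinomially to get $C_\alpha = \frac{|\alpha|!}{\alpha!}A_{|\alpha|}$, pass to the $\mathcal H^2(\mathscr B)$ model with $B_\alpha^*B_\alpha = C_\alpha^{-1}$, and invoke Theorem \ref{similar-thm}. The only difference is that you spell out the order-reversing inversion (swapping $m_1$ with $m_2$ and replacing $D$ by $(D^*)^{-1}$), a bookkeeping step the paper leaves implicit; your handling of it is accurate.
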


\begin{proof}
It is easy to see that 
\beqn
\kappa(z,w) = \sum_{\alpha \in \mathbb N^d} C_\alpha z^\alpha \bar{w}^\alpha \quad  \mbox{ and } \quad \tilde\kappa(z,w) = \sum_{\alpha \in \mathbb N^d} \tilde C_\alpha z^\alpha \bar{w}^\alpha, \ \ z,w \in \mathbb B_d,
\eeqn
where $C_\alpha = \frac{|\alpha|!}{\alpha!} A_{|\alpha|}$ and $\tilde C_\alpha = \frac{|\alpha|!}{\alpha!} \tilde A_{|\alpha|}$ for all $\alpha \in \mathbb N^d$. Since $\mathscr M_z$ on $\mathcal H(\kappa)$ is unitarily equivalent to $\mathscr M_z$ on $\mathcal H^2(\mathscr B)$ and $\mathscr M_z$ on $\mathcal H(\tilde\kappa)$ is unitarily equivalent to $\mathscr M_z$ on $\mathcal H^2(\tilde{\mathscr B})$, where $\mathscr B = \{B_\alpha\}_{\alpha \in \mathbb N^d}$, $\tilde{\mathscr B} = \{\tilde B_\alpha\}_{\alpha \in \mathbb N^d}$ with $B_\alpha^* B_\alpha = C_\alpha^{-1}$ and $\tilde B_\alpha^* \tilde B_\alpha = \tilde C_\alpha^{-1}$ for all $\alpha \in \mathbb N^d$, it now immediately follows from Theorem \ref{similar-thm} that $\mathscr M_z$ on $\mathcal H(\kappa)$ is similar to $\mathscr M_z$ on $\mathcal H(\tilde\kappa)$ if and only if there exist positive constants $m_1, m_2$ and an invertible $n \times n$ matrix $C$ such that 
\beqn
m_1 C^* A_{|\alpha|} C \leqslant \tilde A_{|\alpha|} \leqslant m_2 C^* A_{|\alpha|} C \quad \mbox{for all } \alpha \in \mathbb N^d.
\eeqn 
This completes the proof.
\end{proof}

\end{document}